\documentclass[11pt]{amsart}

\usepackage{amsmath,amssymb,amsthm}
\usepackage{hyperref}
\hypersetup{hidelinks}
\usepackage{tikz}

\newtheorem{theorem}{Theorem}[section]
\newtheorem{lemma}[theorem]{Lemma}
\newtheorem{proposition}[theorem]{Proposition}
\newtheorem{corollary}[theorem]{Corollary}
\theoremstyle{definition}
\newtheorem{definition}[theorem]{Definition}
\newtheorem{remark}[theorem]{Remark}

\newtheorem{question}[theorem]{Question}

\title[Crossing Numbers of Knots on Closed Surfaces]
{Crossing Numbers of Knots on Closed Surfaces}
\author{Makoto Ozawa}
\address{Department of Natural Sciences, Faculty of Arts and Sciences, Komazawa University, 1-23-1 Komazawa, Setagaya-ku, Tokyo, 154-8525, Japan}
\email{w3c@komazawa-u.ac.jp}
\date{\today}
\dedicatory{Dedicated to Professor Kanji Morimoto on the occasion of his retirement.}
\keywords{knot, surface crossing number, tunnel number, Heegaard deficiency, Heegaard splitting, surface bridge number, surface ascending number}
\subjclass[2020]{57K10, 57M27, 57K35}
\begin{document}
\begin{abstract}
For a knot $K$ and a closed connected surface $F\subset S^{3}$, we define the
surface crossing number $c(K;F)$ as the minimum number of crossings among
regular projections of representatives of $K$ to $F$.  Let
$S^{3}=M_1\cup_F M_2$ and let
\[
\delta(F)=g(M_1;F)+g(M_2;F)-g(F),
\]
where $g(M_i;F)$ denotes relative Heegaard genus.  We prove that the
zero-crossing case is controlled by tunnel number: if $c(K;F)=0$, then
$t(K)\le \delta(F)$.  Consequently, if $t(K)>\delta(F)$, then $c(K;F)>0$ and
\[
c(K;F)\ge 2\bigl(t(K)-\delta(F)\bigr)+1.
\]
Thus $\delta(F)$ is the amount of tunnel complexity that can be absorbed by the
surface without creating crossings.  The proof combines surface ascending
number, surface bridge number, and Heegaard amalgamation.  For each fixed $F$,
$c(K;F)$ is unbounded, and explicit connected-sum families exhibit linear
growth.
\end{abstract}
\maketitle

\section{Introduction}
\subsection{Background and Motivation}
The classical crossing number is defined from planar diagrams, and hence from projections to $S^2$.

In this paper we consider diagrams on a closed surface $F \subset S^3$. For a knot $K$, the \emph{surface crossing number} $c(K;F)$ is the minimal number of crossings among all regular diagrams obtained by isotoping $K$ into a regular neighborhood of $F$.

Our main result identifies $\delta(F)$ as a threshold: if the tunnel number of $K$ exceeds this threshold, then crossings on $F$ are forced and satisfy a linear lower bound.

\subsection{Main Results}
The key point is that the case $c(K;F)=0$ should not be treated as a small
crossing-number case.  It is a different geometric regime.  If a knot has a
crossing-free diagram on $F$, then the complexity of the knot is carried by the
surface and by the two complementary 3-manifolds cut off by $F$.  The correct
lower bound therefore begins only after the tunnel number of the knot exceeds
this ambient threshold.

Let $S^3=M_1\cup_F M_2$ be the decomposition determined by the closed surface
$F$, and put
\[
\delta(F)=g(M_1;F)+g(M_2;F)-g(F).
\]
The precise statements are given in Section~\ref{sec:main_results}.  In brief,
the zero-crossing stratum satisfies
\[
c(K;F)=0 \quad\Longrightarrow\quad t(K)\le \delta(F),
\]
and the complementary threshold estimate says that if
\[
t(K)>\delta(F),
\]
then $c(K;F)>0$ and
\[
c(K;F)\ge 2\bigl(t(K)-\delta(F)\bigr)+1.
\]
Thus $\delta(F)$ is exactly the amount of tunnel complexity that can be hidden
by placing $K$ on the surface without crossings.

The proof explains the origin of the threshold.  In the positive-crossing case
one has the diagrammatic estimate
\[
\frac{c(K;F)-1}{2}\ge a(K;F),
\]
where $a(K;F)$ is the surface ascending number.  The subtraction of $1$ comes
from choosing one actual crossing as a base crossing.  When $c(K;F)=0$, no such
base crossing exists, and the above estimate is simply not the right statement.
Instead, the remaining inequalities
\[
a(K;F)\ge b(K;F)-1\ge t(K)-\delta(F)
\]
show that a crossing-free diagram forces $t(K)\le\delta(F)$.

For connected sums
\[
K_m=\underbrace{K_0\#\cdots\#K_0}_{m\text{ times}},
\]
where $t(K_0)=1$ and $t(K_m)\ge m$, the main theorem gives, for all
$m>\delta(F)$,
\[
c(K_m;F)\ge 2m-2\delta(F)+1.
\]
Thus the surface crossing number grows linearly in this explicit family.

\subsection{Organization of the Paper}
Section 2 recalls the necessary definitions and proves monotonicity under compression.
Section 3 states the main results.
Section 4 proves the zero-crossing obstruction and the threshold inequality.
Section 5 discusses a family of connected sums giving linear growth.
Section 6 records some related questions and remarks.

\section{Preliminaries}
In this section we recall basic notions concerning knots on closed surfaces, tunnel number, and Heegaard splittings. Throughout the paper all manifolds are assumed to be compact, connected, and orientable.

\subsection{Handlebodies, compression bodies, and Heegaard genus}
We recall the standard definitions of compression bodies and Heegaard splittings, following Casson and Gordon \cite{CG}.

\begin{definition}
A \textit{compression body} $W$ is a compact, connected, orientable 3-manifold constructed by starting with a product $S \times [0, 1]$, where $S$ is a closed, orientable (possibly disconnected) surface, and attaching finitely many 1-handles to $S \times \{1\}$.
We denote $\partial_{-} W = S \times \{0\}$ and $\partial_{+} W = \partial W \setminus \partial_{-} W$.
If $\partial_{-} W = \emptyset$, that is, $W$ is constructed by attaching 1-handles to a 3-ball (viewed as a 0-handle), then $W$ is called a \textit{handlebody}.
\end{definition}

\begin{definition}
Let $M$ be a compact, connected, orientable 3-manifold. A \textit{Heegaard splitting} of $M$ is a decomposition $M = V \cup_{H} W$, where $V$ and $W$ are compression bodies such that $V \cap W = \partial_{+} V = \partial_{+} W = H$. The surface $H$ is called the \textit{Heegaard surface} of the splitting.
\end{definition}

When $M$ is a closed 3-manifold, both $V$ and $W$ are necessarily handlebodies. The \textit{Heegaard genus} of a closed 3-manifold $M$, denoted by $g(M)$, is the minimal genus of a Heegaard surface among all Heegaard splittings of $M$.

For the complementary pieces cut off by a closed surface, we shall use the following boundary-relative version of Heegaard genus.

\begin{definition}[Relative Heegaard genus]\label{def:relative_heegaard_genus}
Let $M$ be a compact, connected, orientable 3-manifold with connected boundary $F$.  A \emph{Heegaard splitting of $M$ relative to $F$} is a decomposition
\[
M=C\cup_{\Sigma}H,
\]
where $C$ is a compression body with $\partial_-C=F$, $H$ is a handlebody, and
\[
C\cap H=\partial_+C=\partial H=\Sigma.
\]
The \emph{Heegaard genus of $M$ relative to $F$}, denoted $g(M;F)$, is the minimal genus of such a surface $\Sigma$.  When the boundary surface $F$ is fixed and no confusion is possible, we write simply $g(M)$ for $g(M;F)$.
\end{definition}

\subsection{Knots and diagrams on closed surfaces}
Let $F \subset S^{3}$ be a closed embedded surface. To define a diagram of a knot $K$ on $F$, we consider a closed regular neighborhood $N(F) \cong F \times [-1, 1]$ of $F$ in $S^{3}$, identifying $F$ with $F \times \{0\}$.

By an ambient isotopy of $S^{3}$, we may assume that $K$ is entirely contained in the interior of $N(F)$. The canonical projection $p \colon F \times [-1, 1] \to F$ maps $K$ onto the surface $F$. We say that $K$ is in \textit{regular position} with respect to $F$ if the restriction $p|_{K}$ is an immersion whose only singularities are finitely many transverse double points. Such a projection, equipped with the standard over/under crossing information derived from the $[-1, 1]$ coordinate, yields a \textit{diagram} of $K$ on $F$.

\begin{definition}[Surface crossing number]
The \textit{surface crossing number} $c(K;F)$ is defined as the minimal number of crossings among all regular diagrams of $K$ on $F$. This minimum is taken over all ambient isotopies of $K$ into $N(F)$ that yield regular projections. When $F=S^{2}$, this invariant coincides with the classical crossing number $c(K)$.
\end{definition}

\begin{remark}\label{rem:finite_crossing}
The invariant $c(K;F)$ is finite.  Indeed, the product neighborhood $N(F)$ contains a 3-ball.  Every knot type has a representative contained in this ball, and every classical planar diagram of $K$ may be realized in a disk contained in $F$.  Hence
\[
c(K;F)\le c(K).
\]
\end{remark}

\begin{remark}
Even for a fixed surface $F$, the value of $c(K;F)$ depends on the isotopy class of $K$ inside $N(F)$. Thus one must minimize over all regular positions of $K$ with respect to $F$.
\end{remark}

\subsection{The surface bridge number}
The surface bridge number $b(K;F)$ extends the classical bridge number to a closed surface $F$.  We use a definition that is compatible with bridge decompositions with respect to Heegaard surfaces.  Thus a bridge arc is required not only to have the correct height behavior, but also to be boundary-parallel in the corresponding product region.

\begin{definition}[Diagrammatic bridge presentation]
Let $D$ be a regular diagram of a knot $K$ on $F$.  An \emph{over-bridge} of $D$ is a maximal subarc of the diagram which contains only over-crossings, and an \emph{under-bridge} is defined similarly.  We say that $D$ admits a \emph{$b$-bridge presentation} if it is the union of $b$ over-bridges and $b$ under-bridges.
\end{definition}

\begin{definition}[Geometric bridge presentation]\label{def:surface_bridge}
Let $N(F)\cong F\times[-1,1]$ be a regular neighborhood of $F$, with $F=F\times\{0\}$, and let
\[
h:N(F)\to[-1,1]
\]
be the projection to the second factor.  A knot $K\subset N(F)$ is in \emph{$b$-bridge position with respect to $F$} if the following conditions hold:
\begin{enumerate}
\item $h|_K$ is Morse;
\item all maxima of $h|_K$ lie in $F\times(0,1]$ and all minima lie in $F\times[-1,0)$;
\item $K\cap (F\times[0,1])$ is a union of $b$ arcs, each boundary-parallel in the product $F\times[0,1]$;
\item $K\cap (F\times[-1,0])$ is a union of $b$ arcs, each boundary-parallel in the product $F\times[-1,0]$.
\end{enumerate}
Equivalently, each upper bridge arc admits a bridge disk to $F\times\{0\}$ in $F\times[0,1]$, and each lower bridge arc admits a bridge disk to $F\times\{0\}$ in $F\times[-1,0]$.
The \emph{surface bridge number} $b(K;F)$ is the minimal such integer $b$, taken over all isotopic positions of $K$ in $N(F)$.
\end{definition}

\begin{remark}\label{rem:diagram_geometric_bridge}
The diagrammatic and geometric formulations agree.  Given a diagram with $b$
over-bridges and $b$ under-bridges, realize the over-bridges slightly above
$F$ and the under-bridges slightly below $F$.  After a small perturbation, each
bridge arc is the graph of an arc in $F$ and is therefore boundary-parallel in
the corresponding product region; the product trace gives a bridge disk.
Conversely, projecting a geometric $b$-bridge position to $F$ gives a diagram
with $b$ over-bridges and $b$ under-bridges after a small perturbation.  This
is the surface analogue of the usual passage between bridge diagrams and
bridge positions; compare Doll's generalized bridge positions for Heegaard
splittings \cite{Doll}.
\end{remark}

\begin{remark}\label{rem:bridge_positive}
For every knot $K$ and every closed surface $F$, one has
\[
b(K;F)\ge 1.
\]
Indeed, for any Morse function on a closed 1-manifold, at least one maximum and at least one minimum occur.
\end{remark}

This geometric definition is a surface version of the generalized bridge number introduced by Doll \cite{Doll}, who studied knots and links with respect to Heegaard splittings of 3-manifolds.

\subsection{The surface ascending number}
The ascending number of a knot was introduced by the author in \cite{O} as a measure of the deviation of a knot diagram from a trivial descending diagram. We extend this concept to a closed surface $F$ to define the surface ascending number $a(K;F)$.

\begin{definition}
Let $D$ be an oriented regular diagram of a knot $K$ on a closed surface $F$, and let $x$ be a base point on $D$ distinct from any crossing. As we traverse $D$ starting from $x$ in the direction of the orientation, we pass through each crossing exactly twice. A crossing of $D$ is called a \emph{ascending crossing} if we first encounter it as an under-crossing.

Let $a(D, x)$ denote the number of ascending crossings in $D$ with respect to the base point $x$. The \emph{surface ascending number} $a(K;F)$ is defined as the minimal value of $a(D, x)$, where the minimum is taken over all oriented regular diagrams $D$ of $K$ on $F$ and all choices of the base point $x$.
\end{definition}


\subsection{Monotonicity under compression}
The surface crossing number is monotone under compression of the ambient surface.

\begin{proposition}\label{prop:compression}
Let $F$ and $F'$ be closed surfaces in $S^{3}$. If $F'$ is obtained from $F$ by a compression, then
\[
c(K;F) \le c(K;F').
\]
\end{proposition}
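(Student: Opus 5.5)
Let $D$ be a regular projection of $K$ to $F'$ realizing $c(K;F')$, with $K$ contained in a thin product neighborhood $N(F')\cong F'\times[-1,1]$. The plan is to transfer this diagram to $F$ without creating crossings, by exhibiting a reverse operation taking $F'$ back to $F$ that leaves most of $F'$ untouched.

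Since $F'$ comes from $F$ by compressing along a disk $E$ with $E\cap F=\partial E$, we recover $F$ (up to isotopy) from $F'$ by attaching a tube. Concretely, $F'$ contains two parallel copies $E_+$ and $E_-$ of $E$, and
\[
F=(F'\setminus(\operatorname{int}E_+\cup\operatorname{int}E_-))\cup A,
\]
where $A\cong \partial E\times[-\epsilon,\epsilon]$ is an annulus running along $\partial E$. The tube $A$ lies in a small neighborhood $N(E)\cong E\times[-\epsilon,\epsilon]$ that meets $F'$ only in $E_\pm$.

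\textbf{Step 1.} Move the diagram off the disks $E_\pm$. Isotope $K$ inside $N(F')$ by an isotopy of the form $\phi_t\times\mathrm{id}_{[-1,1]}$, where $\phi_t$ is an ambient isotopy of $F'$ that shrinks $E_+$ and $E_-$ into small disks disjoint from the projection $p(K)$. Such an isotopy exists because $p(K)$ is a finite graph and its complement in $F'$ is open and nonempty. The product form of the isotopy preserves regularity, the number of crossings, and the over/under information. Afterward, $K$ lies in $(F'\setminus(E_+\cup E_-))\times[-1,1]$.

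\textbf{Step 2.} Identify the neighborhoods. The subsurface $F'\setminus(E_+\cup E_-)$ is also a subsurface of $F$, so its product neighborhood in $S^3$ extends both to $N(F')$ and to $N(F)$. Choosing the interval thickness small enough to avoid the tube, the product structures can be taken to agree on this common part. Hence $K$ sits in $N(F)$, and its projection to $F$ equals its projection to $F'$. This gives a regular diagram of $K$ on $F$ with $c(K;F')$ crossings, and therefore $c(K;F)\le c(K;F')$.

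The main obstacle is Step 2: making sure the product neighborhoods are compatible. The normal direction to $F'$ near $E_\pm$ must agree with that of $F$ away from the tube. To handle this, I would first choose $N(F)$, and then define $N(F')$ as $N(F)$ restricted to $F\setminus A$ together with a product neighborhood of $E_\pm$. Because the crossing number is minimized over all isotopic positions, the particular choice of product neighborhood does not matter. One should also check that the co-orientation (which side counts as "over") is consistent, but this holds automatically since the co-orientations on the common subsurface coincide.
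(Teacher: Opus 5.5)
Your proposal is correct and follows essentially the same route as the paper: view $F$ as $F'$ with a tube attached along the two capping disks, move the diagram off those disks by an isotopy of $F'$ that creates no crossings, and read off the same diagram on $F$. Your product-form shrinking isotopy replaces the paper's outermost-arc/innermost-circle step and is cleaner; strictly, you should apply $\phi_1^{-1}\times\mathrm{id}$ so that $p(K)$ leaves $E_\pm$. It also works componentwise when the compression is separating, a case the paper treats separately.
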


\begin{proof}
Dually, $F$ is obtained from $F'$ by attaching a 1-handle, or tube, along the capping disks created by the compression.  This dual 1-handle attachment recovers the original isotopy class of $F$.
Let $D$ be a regular diagram of $K$ on $F'$ realizing $c(K;F')$, and regard the underlying diagram as a finite 1-complex on $F'$.

The attaching disks of the tube may be chosen in the capping disks created by the compression.  After making the diagram transverse to the boundaries of these disks, an outermost-arc and innermost-circle argument removes intersections of the diagram with the disk interiors: since each attaching disk is simply connected, every arc of the diagram lying in such a disk can be pushed to a small collar of the disk boundary, and closed components are removed similarly.  This isotopy is performed on the surface $F'$ and does not create new crossings.

If the compression is separating, $F'$ may be disconnected.  In that case the diagram of the knot lies on one component of $F'$, and the inverse 1-handle is attached to the appropriate capping disks; the same disk-disjointness argument applies componentwise.  Thus the attaching disks can be taken disjoint from the diagram of $K$.  Attaching the tube then realizes the same diagram on the higher-genus surface $F$, with the same number of crossings.  Hence $c(K;F)\le c(K;F')$.
\end{proof}

Repeated compression to 2-spheres gives
\[
c(K;F) \le c(K;S^{2}) = c(K).
\]

\begin{remark}
The same monotonic behavior holds for the surface ascending number and the surface bridge number. By a similar isotopy argument, the 1-handle can be attached without altering the ascending points of a diagram or the configuration of a bridge presentation. This yields $a(K;F) \le a(K;F')$ and $b(K;F) \le b(K;F')$, hence $a(K;F) \le a(K)$ and $b(K;F) \le b(K)$ for any closed surface $F$.
\end{remark}

\subsection{Tunnel number}
Let $E(K)=S^{3}\setminus \operatorname{int} N(K)$ denote the exterior of $K$. The tunnel number was introduced by Clark \cite{C} and is related to Heegaard splittings of the knot exterior \cite{CG}.

\begin{definition}[Clark \cite{C}]
The \textit{tunnel number} of $K$, denoted $t(K)$, is the minimal number of properly embedded arcs $\{\tau_{1},\dots,\tau_{t}\}\subset E(K)$ such that the complement $E(K)\setminus \operatorname{int}N(\tau_{1}\cup\cdots\cup\tau_{t})$ is a handlebody.
\end{definition}

\begin{remark}
Adding $t(K)$ tunnels to the knot neighborhood yields a Heegaard splitting of $S^3$, and the boundary of the resulting handlebody provides a Heegaard surface for $E(K)$. Consequently, $t(K) + 1 = g(E(K))$.
\end{remark}

\subsection{Heegaard splittings determined by a closed surface}
Let $F\subset S^{3}$ be a closed connected surface.  Since $H_{2}(S^{3};\mathbb Z)=0$, every closed orientable surface in $S^{3}$ is separating.  We nevertheless sometimes keep the word ``separating'' in the statements below to emphasize the decomposition
\[
S^{3}=M_{1}\cup_{F}M_{2}.
\]
The manifolds $M_{1}$ and $M_{2}$ need not be handlebodies. The following quantity measures how far $F$ is from being a Heegaard surface.

\begin{definition}[Heegaard deficiency]\label{def:deficiency}
The Heegaard deficiency of $F$ is
\[
\delta(F) := g(M_{1};F)+g(M_{2};F)-g(F).
\]
Equivalently, using the convention of Definition~\ref{def:relative_heegaard_genus}, we write
\[
\delta(F)=g(M_{1})+g(M_{2})-g(F),
\]
where each $g(M_i)$ denotes the Heegaard genus of $M_i$ relative to the boundary component $F$.
\end{definition}

\begin{remark}\label{rem:deficiency}
We have in fact $\delta(F) \ge g(F)$.  Indeed, let
\[
M_i=C_i\cup_{\Sigma_i}H_i
\]
be a Heegaard splitting of $M_i$ relative to $F$, so that $\partial_-C_i=F$.  The compression body $C_i$ is obtained from $F\times[0,1]$ by attaching 1-handles to $F\times\{1\}$.  Attaching 1-handles cannot decrease the genus of the positive boundary.  Hence
\[
g(\Sigma_i)=g(\partial_+C_i)\ge g(\partial_-C_i)=g(F).
\]
Taking the minimum over all relative Heegaard splittings gives $g(M_i;F)\ge g(F)$ for $i=1,2$, and therefore
\[
\delta(F)=g(M_1;F)+g(M_2;F)-g(F)\ge g(F).
\]

When $F$ is a Heegaard surface of $S^{3}$, the manifolds $M_{1}$ and $M_{2}$ are handlebodies of genus $g(F)$, so $g(M_i;F)=g(F)$ for $i=1,2$ and $\delta(F)=g(F)$.  In general, $\delta(F)$ measures how far $F$ is from being a Heegaard surface.
\end{remark}

\subsection{\texorpdfstring{The $h$-genus}{The h-genus}}
We recall the invariant measuring how efficiently a knot can lie on an unknotted surface.

\begin{definition}[$h$-genus, \cite{M2}]
Let $H\subset S^{3}$ be an unknotted closed surface (i.e.\ a Heegaard surface). Define
\[
h(K) = \min \{ g(H) \mid c(K;H)=0 \}.
\]
Thus $h(K)$ is the minimal genus of a Heegaard surface on which $K$ admits a crossing-free diagram.
\end{definition}

\begin{proposition}[{\cite[Fact]{M2}}]
For any knot $K\subset S^{3}$,
\[
t(K)\le h(K).
\]
\end{proposition}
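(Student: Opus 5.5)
We argue directly from the definitions. Let $H\subset S^{3}$ be a Heegaard surface of genus $g=h(K)$ with $c(K;H)=0$. Then $S^{3}=V\cup_{H}W$, where $V$ and $W$ are genus $g$ handlebodies. After an isotopy, $K$ has a crossing-free diagram on $H$. Since the projection $p|_K$ is an embedding and the isotopy is supported in $N(H)$, we may push $K$ onto $H$ itself, so $K$ is a simple closed curve on $H$. The aim is to exhibit $g$ arcs in $E(K)$ whose complement is a handlebody. This shows $g(E(K))\le g+1$, and the Remark following Clark's definition ($t(K)+1=g(E(K))$) then gives $t(K)\le g=h(K)$.

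\textbf{Step 1 (tunnel system).} Push $K$ slightly into $V$, giving a curve $K'\subset V$ parallel to $H$. Choose a complete system of $g$ meridian disks $D_1,\dots,D_g$ of $W$, so that $W\setminus N(\bigcup D_j)$ is a $3$-ball. Dually, choose a spine of $W$, a wedge of $g$ circles, and build the arcs from it. Concretely, consider $V'=V\setminus \operatorname{int}N(K')$. Since $K'$ is parallel to a curve on $\partial V$, there is an annulus $A$ from $K'$ to $K\subset H$. Then $V'$ is homeomorphic to the union of a collar of $\partial N(K')$ and a compression body: attaching to $\partial N(K')\times I$ a single $1$-handle (a neighborhood of an arc of $A$) together with the remaining structure of $V$ yields $V'$.

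\textbf{Step 2 (count).} Rather than take $V'$ apart this way, it is easier to verify the statement directly as a Heegaard splitting of $E(K)$. Take $N(K')\cup(\text{an arc }\alpha\text{ of }A\text{ from }K'\text{ to }H)\cup W$; call it $X$. Then $X$ is the boundary connected sum of $W$ (genus $g$) with a solid torus, namely $N(K')$ joined along a neighborhood of $\alpha$. So $X$ is a genus $g+1$ handlebody containing $K'$. The complement $S^{3}\setminus X$ is $V$ with a tunnel $N(\alpha)\cup N(K')$ removed. Because $K'\cup\alpha$ is isotopic rel $H$ into $H$, this complement is again a handlebody of genus $g+1$.

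\textbf{Step 3 (conversion).} Finally, $X\setminus N(K')$ is $N(\partial N(K'))$ with $g$ (or $g+1$) $1$-handles attached, that is, a compression body. Equivalently, $W$ is a neighborhood of a wedge of $g$ circles. Connecting this wedge to $K'$ by $\alpha$ gives a graph $K'\cup\alpha\cup(\text{spine})$ whose neighborhood is $X$. Collapsing $\alpha$, this graph is $K'$ plus $g$ edges, i.e.\ $g$ tunnels.

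The main obstacle is Step 2. We must check carefully that removing an unknotted-in-$V$ tunnel from $V$ leaves a handlebody. This follows because $K'$ is primitive-like: it is isotopic into $\partial V$ and meets a product collar, so the exterior of $K'\cup\alpha$ in $V$ is homeomorphic to $V$ with the collar $H\times I$ modified by a $1$-handle. One can verify this via the product structure $H\times I$ near the boundary.
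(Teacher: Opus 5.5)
Your argument is correct, but it is not the route the paper takes. The paper builds no tunnel system. Instead (Remark~\ref{rem:h_genus_from_zero}) it obtains $t(K)\le h(K)$ as the Heegaard-surface case of Proposition~\ref{prop:zero}:
\begin{itemize}
\item a crossing-free diagram on $H$ has $a(K;H)=0$;
\item Lemma~\ref{lem:asc_bridge} then puts $K$ in $1$-bridge position with respect to $H$, i.e.\ gives a $(g,1)$-decomposition;
\item since $\delta(H)=g(H)$, the amalgamation step is vacuous;
\item the Morimoto--Sakuma--Yokota bound $t\le g+b-1$ then gives $t(K)\le g$.
\end{itemize}
You instead build the genus $g+1$ splitting of $E(K)$ by hand, from the spine of $W$, a vertical arc $\alpha$, and a push-off $K'$ of $K$ into $V$. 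In effect you reprove the $b=1$ case of the MSY inequality in the special situation where $K$ lies on $H$. Your version is self-contained and elementary. The paper's version is a one-liner given its lemmas and citations, and it shows $t\le h$ as one instance of the general threshold $t(K)\le\delta(F)$ for arbitrary closed $F$.

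Step 2 needs tightening. The reason the exterior of $K'\cup\alpha$ in $V$ is a handlebody is not that $K'$ is ``primitive-like.'' In general $K'$ is not primitive in $V$: it may bound a disk there or represent a proper power. The actual reason is that $K'\cup\alpha$ lies in the vertical annulus $A$. Concretely, take a collar $H\times[0,1]\subset V$ and set
\[
A=K\times[0,\tfrac12],\qquad K'=K\times\{\tfrac12\},\qquad \alpha=\{p\}\times[0,\tfrac12].
\]
Then the rectangle $R=A\setminus\operatorname{int}N(K'\cup\alpha)$ is a properly embedded disk in $V\setminus\operatorname{int}N(K'\cup\alpha)$. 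Cutting along $R$ leaves $V\setminus\operatorname{int}N(A)$, which is homeomorphic to $V$. Hence the exterior is $V$ with one $1$-handle attached, a genus $g+1$ handlebody.

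Two smaller clean-ups. In Step 3 the count is exactly $g$ one-handles, since $N(\alpha)$ together with the $0$-handle of $W$ is a ball glued along a disk and contributes nothing; the hedge ``$g$ (or $g+1$)'' should go. Step 1 can simply be deleted.
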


\begin{remark}\label{rem:h_genus_from_zero}
This inequality is also recovered as a special case of
Proposition~\ref{prop:zero}.  Indeed, if a Heegaard surface $H$ realizes
$h(K)$, then $c(K;H)=0$ and $\delta(H)=g(H)$.  Hence
Proposition~\ref{prop:zero} gives $t(K)\le g(H)=h(K)$.
This explains how the classical $h$-genus obstruction fits into the present
threshold formulation.
\end{remark}

\section{Main Results}\label{sec:main_results}
In this section we state the main results in the threshold form suggested by
the zero-crossing examples.  The surface crossing number does not measure all
of the complexity of a knot when the knot is already carried by the surface.
The invariant starts detecting tunnel complexity only after this complexity
exceeds the amount that can be absorbed by the complementary pieces of the
surface.

\subsection{The zero-crossing stratum}
We first record the obstruction to a crossing-free diagram.

\begin{proposition}[Zero-crossing obstruction]\label{prop:zero}
Let $F\subset S^{3}$ be a closed connected surface, and let
$K\subset S^{3}$ be a knot.  If
\[
c(K;F)=0,
\]
then
\[
t(K)\le \delta(F).
\]
\end{proposition}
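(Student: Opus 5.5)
Assume $c(K;F)=0$. Then, after an isotopy, $K$ lies on $F$ itself as a simple closed curve, and we exhibit directly a Heegaard splitting of the exterior $E(K)$ of genus at most $\delta(F)+1$. By the remark on tunnel number, $g(E(K))=t(K)+1$, so this gives $t(K)\le\delta(F)$. The strategy is the standard one behind the $h$-genus fact: push $K$ slightly into $M_1$ so that it becomes a core-like curve parallel to $F$, and then amalgamate the relative Heegaard splittings of $M_1$ and $M_2$ along $F$. We do not pass through the ascending and bridge numbers here; the zero-crossing case can be handled directly.

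Concretely, I would carry out three steps.

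\textbf{Step 1.} Fix minimal relative splittings $M_i=C_i\cup_{\Sigma_i}H_i$ with $\partial_-C_i=F$ and $g(\Sigma_i)=g(M_i;F)$.

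\textbf{Step 2.} Amalgamate along $F$. The manifold $C_1\cup_F C_2$ is a product $F\times[-1,1]$ with 1-handles attached on both sides. Moving the 1-handles of $C_2$ across the product region, so that they become 1-handles attached to $H_1$ (dually, pushing the handles of $C_1$ into $H_2$), yields a Heegaard splitting of $S^3$ of genus
\[
g(\Sigma_1)+g(\Sigma_2)-g(F)=\delta(F).
\]
This is the usual Schultens amalgamation count. Call the resulting splitting $S^3=V\cup_\Sigma W$, with $g(\Sigma)=\delta(F)$.

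\textbf{Step 3.} Track $K$ through the amalgamation. The curve $K\subset F$ survives as a curve lying on the amalgamated surface $\Sigma$, after choosing the feet of the 1-handles of $C_1$ and $C_2$ in $F$ to be disjoint from $K$. This choice is possible because the attaching disks can be isotoped off a single curve in $F$, exactly as in the disk-avoidance argument of Proposition~\ref{prop:compression}.

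Once $K$ lies on a Heegaard surface $\Sigma$ of genus $\delta(F)$, the standard argument applies. Push $K$ slightly into $V$ and remove a neighborhood $N(K)$. Then $V\setminus N(K)$ is a compression body with $\partial_-=\partial N(K)$ and $\partial_+=\Sigma$, because a curve on the boundary of a handlebody, pushed in, is isotopic into the boundary. Adding one tunnel dual to $K$ gives a genus $g(\Sigma)+1$ splitting of $E(K)$. Hence $t(K)+1\le \delta(F)+1$. Alternatively, one can drill $K$ and use $g(\Sigma)$ arcs, giving $t(K)\le g(\Sigma)$ as in Morimoto's fact $t(K)\le h(K)$.

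\textbf{Main obstacle.} The hardest part is Step 3: making rigorous that $K$ ends up on the amalgamated Heegaard surface and not merely on $F$. The handles of $C_2$ must be slid through $F\times[0,1]$ to become handles of $H_1$ without meeting $K$. I would first try to place $K$ on $F\times\{0\}$ and perform all handle moves in the complement of a vertical annulus $K\times[-1,1]$. This works because the attaching feet can be made disjoint from $K$, and vertical arcs through the product avoid $K\times[-1,1]$. The surface $\Sigma$ is then literally obtained from $F$ by tubing along vertical arcs disjoint from $K$, so $K\subset\Sigma$.
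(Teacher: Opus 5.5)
Your route is sound in outline and genuinely different from the paper's. The paper obtains Proposition~\ref{prop:zero} as the $a=0$ case of its general chain. A crossing-free diagram gives $a(K;F)=0$, and Lemma~\ref{lem:asc_bridge} then gives $b(K;F)\le 1$. Lemma~\ref{lem:amalgamation} isotopes the bridge disks off the amalgamation tubes and applies the Morimoto--Sakuma--Yokota bound $t(K)\le g+b-1$ to the resulting $(\delta(F),1)$-decomposition, giving $t(K)\le\delta(F)$.

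You bypass ascending and bridge numbers entirely. You flatten $K$ into $F$ (the vertical straight-line isotopy works because the projection of $K$ is injective), put the tube feet off the single curve $K$, and conclude that $K$ lies on the amalgamated Heegaard surface $\Sigma$ itself. This needs far less general-position work than making bridge-disk interiors miss the tubes; your Step 3 is routine, not a real obstacle. It also proves the stronger intermediate fact $h(K)\le\delta(F)$, since lying on a genus-$g$ Heegaard surface is stronger than admitting a $(g,1)$-decomposition. In exchange, the paper's route is uniform with the positive-crossing case used for Theorem~\ref{thm:fundamental}.

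One step is false as written: $V\setminus N(K)$ is \emph{not} in general a compression body with $\partial_-=\partial N(K)$ and $\partial_+=\Sigma$. Take $V$ a solid torus and $K$ a curve on $\partial V$ winding twice around the core, pushed in. Then $V\setminus N(K)$ is a cable space, not $T^2\times I$. The claim also proves too much. It would make $(V\setminus N(K))\cup_\Sigma W$ a genus-$g(\Sigma)$ splitting of $E(K)$, giving $t(K)\le\delta(F)-1$, which is false for a trefoil on the standard torus.

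What is true is the version with the tunnel built in. Let $A\subset V$ be the annulus from $K$ to a curve on $\Sigma$, and let $\tau\subset A$ be a spanning arc.
\begin{itemize}
\item $A\setminus N(K\cup\tau)$ is a nonseparating disk in $V\setminus N(K\cup\tau)$ that cuts it into a copy of $V$. Hence $V\setminus N(K\cup\tau)$ is a handlebody of genus $g(\Sigma)+1$.
\item Its complement in $E(K)$ is a collar of $\partial N(K)$ with $W$ attached by the $1$-handle $N(\tau)$, which is a compression body with $\partial_-=\partial N(K)$.
\end{itemize}
This gives $g(E(K))\le\delta(F)+1$, so $t(K)\le\delta(F)$.

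Your fallback of quoting Morimoto's $t(K)\le h(K)$ also closes the argument. However, Remark~\ref{rem:h_genus_from_zero} derives that inequality from this very proposition, so with that citation the remark becomes circular. The explicit one-tunnel argument avoids this.
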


\begin{remark}
This proposition is sharp in the simplest examples.  If $K$ is the unknot and
$F=S^2$, then $c(K;F)=0$, $t(K)=0$, and $\delta(F)=0$.  If $K$ is a torus knot
lying on the standard Heegaard torus $F$, then $c(K;F)=0$, $t(K)=1$, and
$\delta(F)=1$.  Thus crossing-free diagrams on a surface should be understood
as diagrams whose knot complexity is carried by the surface rather than by
self-crossings of the projection.
\end{remark}

\subsection{The threshold inequality}
The main theorem says that once the tunnel number exceeds the Heegaard
threshold $\delta(F)$, crossings are forced and are quantitatively bounded
from below.

\begin{theorem}[Threshold inequality]\label{thm:fundamental}
Let $F\subset S^{3}$ be a closed connected surface, and let
$K\subset S^{3}$ be a knot.  If $t(K)>\delta(F)$, then $c(K;F)>0$ and
\[
c(K;F) \ge 2\bigl(t(K)-\delta(F)\bigr)+1.
\]
\end{theorem}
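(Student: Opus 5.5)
The plan follows the chain of inequalities announced in the introduction:
\[
\frac{c(K;F)-1}{2}\ \ge\ a(K;F)\ \ge\ b(K;F)-1\ \ge\ t(K)-\delta(F).
\]
First, positivity is immediate from Proposition~\ref{prop:zero}. If $c(K;F)=0$ then $t(K)\le\delta(F)$, contradicting the hypothesis, so $c(K;F)\ge 1$. The three inequalities are then established separately, for a diagram $D$ on $F$ realizing $c=c(K;F)\ge 1$.

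\textbf{Step 1: $a(K;F)\le (c-1)/2$.}
\begin{itemize}
\item Choose a base point $x$ just before some crossing $p$, and traverse $D$ in both orientations.
\item Each crossing $q\ne p$ is ascending for exactly one of the two orientations, since reversing orientation swaps the first encounter. The same holds for $p$ with $x$ placed adjacent to it.
\item Place $x$ immediately before $p$ on its over-strand. Then $p$ is descending for the forward orientation. For the reverse orientation with the base point just after $p$, $p$ is met last, so it can be made descending too by shifting $x$ across $p$ on the appropriate side.
\item The remaining $c-1$ crossings split between the two orientations, so one orientation has at most $(c-1)/2$ ascending crossings.
\end{itemize}
This is exactly where the base crossing supplies the ``$-1$''. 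I will carry this out as in the classical ascending-number argument of \cite{O}, taking care with the base-point placement.

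\textbf{Step 2: $b(K;F)\le a(K;F)+1$.}
\begin{itemize}
\item Take a diagram $D$ and base point $x$ with $a(D,x)=a$.
\item Lift $K$ monotonically: descending along $D$ from $x$ so that every non-ascending crossing is realized correctly, i.e.\ it is first met as an over-crossing, high up.
\item At each ascending crossing, insert a small local maximum/minimum pair to pass over the earlier strand.
\item Closing up at $x$ costs one extra maximum/minimum pair. This gives a Morse position with at most $a+1$ maxima.
\item Flatten so that the maxima lie just above $F$ and the minima just below, each arc a graph over an arc in $F$. This gives a $(a+1)$-bridge position as in Remark~\ref{rem:diagram_geometric_bridge}.
\end{itemize}

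\textbf{Step 3: $t(K)\le b(K;F)-1+\delta(F)$.} This is the geometric core. Take a $b$-bridge position with $b=b(K;F)$, and relative Heegaard splittings $M_i=C_i\cup_{\Sigma_i}H_i$ of minimal genus $g(M_i;F)$.
\begin{itemize}
\item The upper arcs with their bridge disks lie in the collar $F\times[0,1]\subset C_1$ side. Drill them out and add $b-1$ of the $b$ arcs of $K\cap F$ as tunnels, and do the same on the lower side.
\item More cleanly, view $(S^3,K)$ as the amalgamation along $F$ of $(M_1,\text{upper arcs})$ and $(M_2,\text{lower arcs})$. Each side has a splitting of genus $g(M_i;F)$ in which the trivial arcs are boundary-parallel to $F$.
\item Amalgamating two splittings along $F$ gives a splitting of $S^3$ of genus $g(M_1;F)+g(M_2;F)-g(F)=\delta(F)$ in which $K$ is in $b$-bridge position with respect to the amalgamated Heegaard surface (Doll \cite{Doll}).
\item A knot in $b$-bridge position with respect to a genus-$g$ Heegaard surface has exterior of Heegaard genus at most $g+b$: tube along $b-1$ upper bridge arcs and one side, i.e.\ the standard estimate $t(K)\le g+b-1$.
\end{itemize}
This yields $t(K)\le\delta(F)+b-1$.

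Combining the three steps gives $c\ge 2a+1\ge 2(b-1)+1\ge 2(t(K)-\delta(F))+1$.

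The main obstacle is Step~3, specifically the amalgamation. One must verify that the trivial arcs, being parallel into $F$ within the collar, remain boundary-parallel into the amalgamated Heegaard surface. I would handle this by pushing each bridge disk into the product collar $F\times I\subset C_i$ before amalgamating, so that the arcs live in a product region through which the amalgamation surface passes vertically. Then Doll's tube-along-arcs construction gives the tunnel count.
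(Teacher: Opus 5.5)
Your proposal is correct and follows the paper's proof: positivity from Proposition~\ref{prop:zero}, then the chain $\frac{c(K;F)-1}{2}\ge a(K;F)\ge b(K;F)-1\ge t(K)-\delta(F)$. The three links are the base-crossing orientation-reversal trick (Lemma~\ref{lem:asc_cross}), the descending lift with one extra maximum per ascending crossing (Lemma~\ref{lem:asc_bridge}), and amalgamation of minimal relative splittings into a genus-$\delta(F)$ Heegaard surface followed by the $(g,b)$ bound $t\le g+b-1$ (Lemma~\ref{lem:amalgamation}). The obstacle you flag in Step~3 is handled in the paper essentially as you suggest: the tube feet are placed off the arcs $\Delta_j^\pm\cap F$, and the bridge disks are isotoped off the vertical tubes, so they remain bridge disks for $F'$.
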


\begin{remark}
When $F$ is a Heegaard surface of $S^{3}$, one has $\delta(F)=g(F)$ by
Remark~\ref{rem:deficiency}.  Hence Theorem~\ref{thm:fundamental} says that
if $t(K)>g(F)$, then
\[
c(K;F)\ge 2(t(K)-g(F))+1.
\]
In particular, a fixed Heegaard surface of genus $g$ can carry crossing-free
representatives only for knots with tunnel number at most $g$.
\end{remark}

\subsection{Existence of knots with arbitrarily large surface crossing number}
As an immediate consequence, the surface crossing number is unbounded for any
fixed closed surface.

\begin{theorem}\label{thm:unbounded}
Let $F\subset S^{3}$ be any closed connected surface, and let $n$ be a positive
integer. Then there exists a knot $K\subset S^{3}$ such that
\[
c(K;F)\ge n.
\]
\end{theorem}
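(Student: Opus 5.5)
The plan is to deduce Theorem~\ref{thm:unbounded} directly from the threshold inequality, Theorem~\ref{thm:fundamental}, by exhibiting knots of arbitrarily large tunnel number. Fix $F$. Then $\delta(F)=g(M_1;F)+g(M_2;F)-g(F)$ is a fixed nonnegative integer; it is finite because each $M_i$ admits a relative Heegaard splitting. Finiteness can be seen, for instance, from a triangulation: take a regular neighbourhood of $F$ union the $1$-skeleton as the compression body, and its complement is a handlebody. By Remark~\ref{rem:deficiency}, $\delta(F)\ge g(F)\ge 0$.

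Given $n\ge 1$, choose an integer $m$ with
\[
m>\delta(F)\quad\text{and}\quad 2\bigl(m-\delta(F)\bigr)+1\ge n,
\]
for example $m=\delta(F)+n$. Then take a knot $K$ with $t(K)\ge m$. The family suggested in the introduction works: $K=K_m$, the connected sum of $m$ copies of a tunnel number one knot $K_0$. For the lower bound $t(K_m)\ge m$ I would not rely on subadditivity heuristics, since tunnel number can degenerate under connected sum. Instead I would cite a known theorem. One option is the result that the tunnel number of a connected sum of $m$ knots with tunnel number one is at least $m$; this holds for instance for $m$-small knots by Morimoto--Schultens type results. Alternatively, use the rank bound $t(K)+1\ge \operatorname{rank}\pi_1(E(K))$ together with a family whose knot groups have rank at least $m+1$, such as connected sums of $m$ torus knots via Weidmann's / Grushko-type rank estimates.

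With such $K$, $t(K)\ge m>\delta(F)$, so Theorem~\ref{thm:fundamental} applies and gives
\[
c(K;F)\ge 2\bigl(t(K)-\delta(F)\bigr)+1\ge 2\bigl(m-\delta(F)\bigr)+1\ge n.
\]
This completes the argument.

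The only non-formal step, and hence the main obstacle, is securing a family with provably unbounded tunnel number. I would first try torus-knot connected sums with the rank-of-fundamental-group bound, since rank of $\pi_1$ of the exterior is a lower bound for $g(E(K))=t(K)+1$ and is well understood for free products with amalgamation over $\mathbb{Z}^2$ in this setting. If that citation is awkward, I would fall back on the Scharlemann--Schultens bound $t(K_1\#\cdots\#K_m)\ge m$ for prime knots.
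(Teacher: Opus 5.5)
Your proposal is correct and follows the paper's proof: choose a knot whose tunnel number exceeds $\delta(F)$ by enough, then apply Theorem~\ref{thm:fundamental}. The paper's two-line proof leaves the existence of such a knot implicit; it is supplied later by the family $K_m$ with $t(K_m)\ge m$ from Morimoto and Scharlemann--Schultens. Your explicit choice $m=\delta(F)+n$, $K=K_m$, together with the observation that $\delta(F)$ is a finite nonnegative integer, simply fills in that step.
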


\begin{proof}
Choose $K$ with
\[
t(K)>\delta(F)
\quad\text{and}\quad
2\bigl(t(K)-\delta(F)\bigr)+1\ge n.
\]
Then Theorem~\ref{thm:fundamental} applies and gives $c(K;F)\ge n$.
\end{proof}

\subsection{Explicit realization by connected sums}\label{subsec:explicit}
Let $K_{0}$ be a nontrivial knot with $t(K_{0})=1$, for instance a trefoil
knot, and define
\[
K_m=\underbrace{K_0\#\cdots\#K_0}_{m\text{ times}}.
\]
By results of Morimoto \cite{M} and Scharlemann--Schultens \cite{SS}, the
tunnel number satisfies $t(K_m)\ge m$.  Hence, for each fixed $F$, the
threshold $\delta(F)$ is eventually exceeded.

\begin{proposition}\label{prop:linear_lower_intro}
For every closed connected surface $F\subset S^{3}$ and every integer
$m>\delta(F)$,
\[
c(K_m;F) \ge 2\bigl(m-\delta(F)\bigr)+1.
\]
In particular, $c(K_m;F)\to\infty$ linearly as $m\to\infty$.
\end{proposition}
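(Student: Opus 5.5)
This is a direct specialization of Theorem~\ref{thm:fundamental} to the family $K_m$. The only input beyond the theorem is the tunnel number estimate $t(K_m)\ge m$, quoted above from Morimoto \cite{M} and Scharlemann--Schultens \cite{SS}. I would take that estimate as given, with $K_0$ a nontrivial knot satisfying $t(K_0)=1$, such as the trefoil.

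Fix $F$ and an integer $m>\delta(F)$. Since $t(K_m)\ge m>\delta(F)$, the hypothesis of Theorem~\ref{thm:fundamental} holds for $K=K_m$. The theorem then gives $c(K_m;F)>0$ and
\[
c(K_m;F)\ge 2\bigl(t(K_m)-\delta(F)\bigr)+1 .
\]
The right-hand side is increasing in $t(K_m)$. Substituting the lower bound $t(K_m)\ge m$ therefore yields $c(K_m;F)\ge 2\bigl(m-\delta(F)\bigr)+1$. This is the claimed inequality.

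For the final assertion, note that $\delta(F)$ depends only on $F$ and is finite: relative Heegaard splittings exist for each $M_i$, and Remark~\ref{rem:deficiency} shows $\delta(F)\ge g(F)\ge 0$. The bound $2m-2\delta(F)+1$ is thus an affine function of $m$ with slope $2$, so $c(K_m;F)\to\infty$ at least linearly. Remark~\ref{rem:finite_crossing} gives a linear upper bound as well, namely $c(K_m;F)\le c(K_m)\le m\,c(K_0)$, using a diagram of the connected sum. So the growth is genuinely linear, with the upper bound included only as a supplement.

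The one step that needs care is the citation $t(K_m)\ge m$. In general, tunnel number can degenerate under connected sum, so the bound is not automatic. I would use the form of the results that gives $t(K_1\#\cdots\#K_m)\ge m$ for $m$-fold sums of nontrivial knots. This follows, for instance, from Scharlemann--Schultens' bound $t(K_1\#\cdots\#K_m)\ge m$ for prime summands, or from Morimoto's results for tunnel-number-one summands. For safety I would take $K_0$ to be the trefoil, where these hypotheses are certainly met. Everything else is a formal substitution into the theorem.
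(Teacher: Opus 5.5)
Your proof is correct and is essentially the paper's argument: you combine $t(K_m)\ge m>\delta(F)$ (Morimoto, Scharlemann--Schultens) with Theorem~\ref{thm:fundamental} and substitute, exactly as the paper does. Your added remarks on the finiteness of $\delta(F)$ and the upper bound $c(K_m;F)\le m\,c(K_0)$ match Remark~\ref{rem:deficiency} and Proposition~\ref{prop:upper_bound}, but they are not needed for the inequality itself.
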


\begin{proof}
Since $t(K_m)\ge m>\delta(F)$, Theorem~\ref{thm:fundamental} gives
\[
c(K_m;F)\ge 2\bigl(t(K_m)-\delta(F)\bigr)+1
          \ge 2\bigl(m-\delta(F)\bigr)+1.
\]
\end{proof}

\begin{corollary}
For every closed connected surface $F\subset S^{3}$,
\[
\sup_K c(K;F)=\infty.
\]
\end{corollary}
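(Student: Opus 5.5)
The corollary is a direct consequence of Theorem~\ref{thm:unbounded}, or equivalently of Proposition~\ref{prop:linear_lower_intro}. The plan is to fix the closed connected surface $F\subset S^3$ and show that for every positive integer $n$ some knot $K$ satisfies $c(K;F)\ge n$. Since $\sup_K c(K;F)$ dominates each such value, it must then be infinite.

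Concretely, I would use the explicit family $K_m$ of Subsection~\ref{subsec:explicit}. The deficiency $\delta(F)$ is a fixed finite nonnegative integer depending only on $F$: each relative Heegaard genus $g(M_i;F)$ is finite, since compact 3-manifolds admit relative Heegaard splittings, and $\delta(F)\ge g(F)\ge 0$ by Remark~\ref{rem:deficiency}. Given $n$, choose
\[
m>\max\Bigl\{\delta(F),\ \delta(F)+\tfrac{n-1}{2}\Bigr\}.
\]
Proposition~\ref{prop:linear_lower_intro} then gives
\[
c(K_m;F)\ge 2\bigl(m-\delta(F)\bigr)+1\ge n.
\]
The lower bound $t(K_m)\ge m$ used there is quoted from Morimoto and Scharlemann--Schultens, and all the geometric content is carried by Theorem~\ref{thm:fundamental}.

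Taking the supremum over $n$ gives $\sup_K c(K;F)=\infty$. By Remark~\ref{rem:finite_crossing}, each individual value $c(K;F)$ is finite, so the supremum is genuinely infinite rather than trivially attained by some single knot.

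There is no real obstacle at the level of the corollary itself. All of the difficulty is deferred to Theorem~\ref{thm:fundamental}, namely the chain
\[
\frac{c(K;F)-1}{2}\ge a(K;F)\ge b(K;F)-1\ge t(K)-\delta(F),
\]
whose last inequality requires amalgamating a bridge position with the relative Heegaard splittings of $M_1$ and $M_2$. The only point to check here is that $\delta(F)$ is finite and independent of $K$, so that the threshold is eventually exceeded.
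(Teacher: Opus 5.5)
Your proposal is correct and follows the paper's own route. The paper reads the corollary off directly from Theorem~\ref{thm:unbounded}, or equivalently from Proposition~\ref{prop:linear_lower_intro} applied to the family $K_m$ with $t(K_m)\ge m$ (Lemma~\ref{lem:tunnel_bound}), and your explicit choice $m>\delta(F)+\frac{n-1}{2}$ also makes concrete the existence of knots with large tunnel number, which the proof of Theorem~\ref{thm:unbounded} leaves implicit.
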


\section{Proof of the Threshold Inequality}
In this section we prove Proposition~\ref{prop:zero} and
Theorem~\ref{thm:fundamental}.  The distinction between $c(K;F)=0$ and
$c(K;F)>0$ arises in the first diagrammatic estimate.  If a diagram has at
least one crossing, one crossing can be chosen as a base crossing and is not
counted among the ascending crossings.  This gives the estimate
\[
\frac{c(K;F)-1}{2}\ge a(K;F).
\]
If there are no crossings, however, there is no base crossing to choose; the
correct conclusion is instead obtained from the bridge and amalgamation
inequalities.

Throughout the proof, let $F\subset S^{3}$ be a closed connected surface with
decomposition
$S^{3}=M_{1}\cup_{F}M_{2}$.

\subsection{Step 1. Bounding ascending number by crossing number}

\begin{lemma}[Positive crossing estimate]\label{lem:asc_cross}
If $c(K;F)>0$, then
\[
\frac{c(K;F)-1}{2} \ge a(K;F).
\]
\end{lemma}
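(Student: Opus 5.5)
Take a diagram $D$ of $K$ on $F$ realizing $c=c(K;F)>0$ crossings, fix an orientation, and compare the two directions of traversal. Since $a(K;F)$ minimizes $a(D,x)$ over all oriented diagrams and base points, it suffices to find one orientation and one base point $x$ with $a(D,x)\le (c-1)/2$.

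\textbf{Step 1 (orientation reversal).} Fix a base point $x$ on $D$ away from the crossings. Each crossing $p$ is met twice as we traverse $D$ from $x$. If we reverse the orientation and keep the same $x$, the order of these two passages is swapped. Hence a crossing is ascending for one orientation exactly when it is descending for the other. This gives
\[
a(D,x)+a(-D,x)=c .
\]
Consequently $\min\{a(D,x),a(-D,x)\}\le c/2$, and since the left side is an integer it is at most $\lfloor c/2\rfloor$. When $c$ is odd this already gives $(c-1)/2$.

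\textbf{Step 2 (the even case, using a base crossing).} This is the step that uses $c>0$. Pick one crossing $p_0$ and place the base point $x$ on $D$ immediately before $p_0$, on its over-strand for the chosen orientation. Then $p_0$ is met first as an over-crossing, so it is descending.

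Now reverse the orientation and place a base point $x'$ immediately before $p_0$ on its over-strand in the reversed direction. This is the point just past $p_0$ on the same over-strand, so again $p_0$ is descending.

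The delicate point is to check that every other crossing $q\neq p_0$ is ascending in exactly one of $(D,x)$ and $(-D,x')$. Moving the base point across $p_0$ changes only the status of $p_0$, so for each $q\neq p_0$ the order of its two passages is still swapped between the two configurations. Therefore
\[
a(D,x)+a(-D,x')=c-1,
\]
and one of the two terms is at most $(c-1)/2$.

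\textbf{Step 3 (conclusion).} Since $a(K;F)$ is at most the smaller of these two values, we obtain $a(K;F)\le (c-1)/2$. This bound holds for every $c\ge1$, which is exactly the claim of the lemma, so no parity split is needed in the end.

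The main obstacle is the bookkeeping in Step 2. One must make sure that $x$ and $x'$ can both be chosen off the crossings while lying on the over-strand adjacent to $p_0$. One must also check that sliding the base point across the single crossing $p_0$ flips only $p_0$'s status. Both facts follow from examining the cyclic order of crossing passages along $D$.
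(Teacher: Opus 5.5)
Your proof is correct and is essentially the paper's argument: fix a base crossing met first on its over-strand, then reverse the orientation and slide the base point across that crossing so it is again met first as an over-crossing. The remaining $c-1$ crossings swap status, giving $a(D,x)+a(-D,x')=c-1$; your Step 1 (which settles odd $c$ directly) is a harmless extra that Step 2 makes unnecessary.
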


\begin{proof}
Let $D$ be a regular diagram of $K$ on $F$ with $c(K;F)=c>0$.

Choose a crossing of $D$ and specify a base point
and orientation on $K$ so that traversal from the base point first encounters
this crossing as an over-crossing. Call the resulting based, oriented diagram
$D_1$.

Let $D_2$ be the based, oriented diagram obtained by reversing the orientation
of $K$ and sliding the base point so that the same chosen crossing is first
encountered as an over-crossing.

Since the chosen crossing is processed as an over-crossing first in both
$D_1$ and $D_2$, the remaining $c-1$ crossings contribute to the ascending
points. For these crossings, a first-passage under-crossing in $D_1$ becomes
a first-passage over-crossing in $D_2$, and vice versa. Therefore
\[
a(D_1) + a(D_2) = c - 1.
\]

It follows that
\[
\frac{c(K;F)-1}{2}
 = \frac{c-1}{2}
 = \frac{a(D_1) + a(D_2)}{2}
 \ge \min\{a(D_1), a(D_2)\}
 \ge a(K;F).
\]
\end{proof}

\subsection{Step 2. Bounding bridge number by ascending number}

\begin{lemma}[{\cite[Theorem 2.2]{O}}]\label{lem:asc_bridge}
For any knot $K$ and closed surface $F$,
\[
a(K;F) \ge b(K;F) - 1.
\]
\end{lemma}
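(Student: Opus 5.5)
We prove $a(K;F)\ge b(K;F)-1$ by turning a minimizing based diagram into a bridge presentation, in the spirit of the classical argument of \cite[Theorem 2.2]{O}. Let $D$ be an oriented regular diagram of $K$ on $F$ with base point $x$ such that $a(D,x)=a(K;F)=:a$. The plan is to cut $D$ into at most $a+1$ pieces, each of which can be realized as a single descending arc, and then read off a bridge position with at most $a+1$ maxima.

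\textbf{Subdividing the diagram.} Traverse $D$ from $x$ along the orientation. Each time we reach an ascending crossing for the first time (as an under-crossing), insert a cut point just before it. This produces at most $a+1$ consecutive subarcs $\alpha_0,\dots,\alpha_a$ of $D$, listed in order of traversal from $x$.

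\textbf{Assigning heights.} Define the height function $h$ on $K$ so that it decreases strictly along the orientation inside each $\alpha_j$, in the style of a descending diagram. We then check crossings using the rule that whenever a crossing is first met as an over-crossing, the first passage must be higher. Consider a non-ascending crossing whose two passages both lie in the same $\alpha_j$. Its first passage is the over-strand, and since $h$ decreases along $\alpha_j$, that passage is automatically higher, so the crossing is realized correctly. For crossings whose two passages lie in different pieces, we choose the height intervals of the pieces suitably. At an ascending crossing the cut lets us jump up, so the later passage, which is the over-strand, can be placed above the earlier one. This is exactly the standard ``descending except at $a$ places'' construction.

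\textbf{Reading off bridge position.} After the construction, $h|_K$ has at most $a+1$ local maxima, namely at the starts of the $\alpha_j$ together with the closing jump at $x$. Correspondingly it has at most $a+1$ minima. Push the maxima into $F\times(0,1]$ and the minima into $F\times[-1,0)$.

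The step I expect to be the main obstacle is verifying condition (3) of Definition~\ref{def:surface_bridge}: each arc of $K\cap(F\times[0,1])$ must be boundary-parallel in $F\times[0,1]$. The arcs live in a product over the surface $F$ rather than over a disk. I would follow Remark~\ref{rem:diagram_geometric_bridge}. Each upper arc projects to an embedded arc in $F$, since it contains only a single maximum and we may perturb it so that its projection is embedded. It is then the graph of a function over that embedded arc. The vertical product trace of this graph gives a bridge disk to $F\times\{0\}$, and the lower arcs are handled symmetrically. The remaining check is that disjoint arcs whose projections cross do not obstruct one another. This is fine, since boundary-parallelism is required only arc by arc in (3) and (4).

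Combining these steps, $K$ is in $(a+1)$-bridge position with respect to $F$, so $b(K;F)\le a(K;F)+1$, which is the claimed inequality.
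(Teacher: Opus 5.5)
\textbf{The cuts are at the wrong passages.} As written, your construction does not realize $K$. You cut just before the \emph{first} visit $u$ of each ascending crossing, which is its under-passage, and then let $h$ decrease strictly along each piece. Take the ascending crossing whose under-visit $u$ is met last. The cut before $u$ starts $\alpha_a$, and the over-visit $o$ comes after $u$ with no further cut, so $o\in\alpha_a$. Monotonicity along $\alpha_a$ then gives $h(u)>h(o)$, so the under-strand is placed on top. Hence for every $a\ge 1$ (already for $a=1$) your height function realizes a diagram with a crossing switched. Your sentence that the cut lets ``the later passage'' jump up describes a cut placed just before the \emph{second} (over) visit of each ascending crossing, and that is where the jump must go.

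\textbf{The cross-piece step is unproved.} Even after that repair, ``we choose the height intervals of the pieces suitably'' is only asserted, and it carries the real content. Disjoint, linearly ordered intervals fail in general. A non-ascending crossing from $\alpha_i$ to $\alpha_k$ with $i<k$ needs part of $\alpha_i$ above part of $\alpha_k$. An ascending crossing needs the start of $\alpha_k$ above part of $\alpha_i$. The missing argument is that the constraints are acyclic. These constraints are: decrease along each piece; first visit above second at non-ascending crossings; and $o$ above $u$ at ascending crossings. In all but the last kind, the higher point precedes the lower one in the traversal from $x$. So a cycle would have to pass through some $o$. But each such $o$ is the first passage of its piece, so no constraint requires anything to lie above it. A linear extension then gives the heights. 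The extrema lie off the crossings, so they can be placed above and below $F\times\{0\}$.

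The paper avoids this bookkeeping. It realizes the descending diagram $d(D)$ with height $1-2t$, giving one maximum and one minimum. It then restores each of the $a$ ascending crossings by a local lift of one strand over the other in a small ball, and each lift adds one maximum.

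\textbf{The bridge-disk justification is wrong as stated.} An arc with a single maximum need not have embedded projection. A descending self-crossing of a piece lying above level $0$ is a transverse double point of the projection, and no small perturbation removes it. The conclusion is still true for a different reason: an arc in $F\times[0,1]$ whose only critical point is a maximum is boundary-parallel. Below the maximum, its two points in each level $F\times\{s\}$ can be joined by an arc in $F\times\{s\}$, carried along by isotopy extension as $s$ decreases. The union of these level arcs is a bridge disk.
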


\begin{proof}
Let $D$ be a regular diagram of $K$ on $F$ with $a(K;F)=a$.  Fix an
orientation and a base point $x$ away from the crossings, and let $d(D)$ be the
descending diagram obtained by changing precisely the $a$ ascending crossings.
Thus, when the diagram is traversed from $x$, the first visit to every crossing
is along the over-strand.

We now realize $d(D)$ in the product neighborhood $N(F)=F\times[-1,1]$.
Cut the diagram at $x$ and parametrize the resulting immersed arc by
$t\in[0,1]$, in the order in which it is traversed from the base point.  Give
this arc the height $1-2t$.  If a crossing is met at parameters $t_1<t_2$, then
the descending condition says that the branch at $t_1$ must be the over-branch;
indeed $1-2t_1>1-2t_2$.  After the usual small local perturbations near the
crossings, this gives an embedded arc in $F\times[-1,1]$ whose projection is
$d(D)$ and whose height is strictly decreasing away from small crossing
neighborhoods.  Finally, close the arc at the base point by a short vertical
arc over $x$.  This vertical arc is contained in a small product fiber
$\{x\}\times[-1,1]$ and is disjoint from the rest of the lifted diagram.

The resulting knot has exactly one local maximum and one local minimum with
respect to the height coordinate.  Moreover, the upper and lower pieces are
boundary-parallel in the two product regions.  For example, the upper piece is
obtained by joining an initial segment of the decreasing lifted arc to a
subarc of the vertical closing arc; projecting this piece to $F\times\{0\}$
and taking the product trace gives a bridge disk.  If the projected path has
self-intersections on $F$, the different sheets occur at different heights,
and a sufficiently thin product trace is embedded.  The same argument applies
to the lower piece.  Hence $d(D)$ gives a $1$-bridge presentation with respect
to $F$.  No assertion is made that this knot is trivial in $S^3$; on a
higher-genus surface a descending diagram may represent an essential homology
class of $F$.

To recover the original knot $K$, restore the original crossing information at
the $a$ ascending crossings.  This is done inside pairwise disjoint small
3-balls in $N(F)$ by pushing the appropriate strand upward over the other
strand.  Each such local move introduces at most one additional local maximum
and preserves the boundary-parallel bridge disks outside the small ball, with
the obvious local disk added in the ball.  Therefore $K$ has a bridge
presentation with at most $1+a$ bridges.  Thus
\[
b(K;F)\le 1+a,
\]
and hence $a(K;F)\ge b(K;F)-1$.
\end{proof}

\subsection{Step 3. Amalgamation of Heegaard splittings}

\begin{lemma}\label{lem:amalgamation}
For any knot $K$ and closed surface $F$,
\[
t(K) \le \delta(F) + b(K;F) - 1.
\]
\end{lemma}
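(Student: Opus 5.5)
The plan is to build an explicit Heegaard splitting of the exterior $E(K)$ of genus $\delta(F)+b(K;F)$. Since $t(K)+1=g(E(K))$, this gives the claimed inequality. The surface $F$ will serve as the thin level of a generalized Heegaard splitting of $E(K)$, and amalgamation will turn that into an honest splitting.

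\emph{Setup.} Put $b=b(K;F)$ and place $K$ in $b$-bridge position with respect to $F$ in the sense of Definition~\ref{def:surface_bridge}. This gives $b$ upper arcs in $F\times[0,1]$ and $b$ lower arcs in $F\times[-1,0]$, each with a bridge disk to $F\times\{0\}$. Enlarge $M_1$ and $M_2$ to $M_1'=M_1\cup F\times[0,1]$ and $M_2'=M_2\cup F\times[-1,0]$; these are homeomorphic to $M_1$ and $M_2$. Choose minimal relative Heegaard splittings $M_i=C_i\cup_{\Sigma_i}H_i$ with $g(\Sigma_i)=g_i=g(M_i;F)$, pushed off the collar so that $K$ avoids $M_1\cup M_2$.

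\emph{Step 1: each half.} On the $M_1'$ side, tube $\Sigma_1$ to $N(K)$ along $b$ arcs. I would take these to be the cores of the upper bridge disks, or equivalently vertical arcs from the maxima of $K$ up to $\Sigma_1$. The aim is to show that $E(K)\cap M_1'$ splits into a compression body and a handlebody-like piece, with splitting surface of genus roughly $g_1+b$ and with negative boundary the punctured surface $F\cap E(K)$ together with the relevant annuli of $\partial N(K)$. The bridge disks certify that these pieces really are compression bodies: compressing along the disks dual to the tunnels returns us to $C_1$ and $H_1$. I would do the same on the $M_2'$ side with $\Sigma_2$ and the lower arcs.

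\emph{Step 2: amalgamation.} Amalgamate the two splittings along the thin surface $F$ in the usual way, as in Schultens' amalgamation, so that the genus of the thin surface is subtracted once. I expect the result to be a Heegaard splitting of $E(K)$ of genus
\[
g_1+g_2-g(F)+b=\delta(F)+b.
\]
Hence $t(K)+1=g(E(K))\le \delta(F)+b$, which is the statement. As a sanity check, a torus knot on the standard Heegaard torus has $b=1$ and $\delta=1$. The splitting then has genus $2$, so $t\le 1$, which is sharp.

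\emph{Main obstacle.} The hard step is the bookkeeping in Steps 1 and 2. The thin surface meets $K$, so the surface along which we amalgamate is $F$ with $2b$ punctures, and the exterior has torus boundary rather than closed boundary. One has to check that the $2b$ punctures and the $b$ tunnels on each side combine to a net contribution of exactly $+b$, and not $+2b$. My first attempt would be to use only $b$ tunnels in total: join consecutive bridges only on one side, and absorb the other side's arcs into the handlebody $N(K)\cup{}$tunnels. This mirrors the classical argument that a genus-$g$ Heegaard surface and a $b$-bridge position give $t\le g+b-1$. I would then verify the compression-body property via the bridge disks.
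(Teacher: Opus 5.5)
There is a genuine gap, and it sits exactly at the step you flag as the main obstacle. Your two pieces $E(K)\cap M_1'$ and $E(K)\cap M_2'$ are glued only along the $2b$-punctured surface $F\cap E(K)$, whose boundary circles lie on $\partial N(K)$. The annuli $A^{+}$ and $A^{-}$ of $\partial N(K)$ are not identified with each other; together they form $\partial E(K)$. Schultens-type amalgamation needs the two splittings to share a closed surface that is the negative boundary of a compression body on each side.

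If you take the negative boundaries to be the closed surfaces $(F\cap E(K))\cup A^{+}$ and $(F\cap E(K))\cup A^{-}$, each of genus $g(F)+b$, these are two different surfaces. The count $(g_1+b)+(g_2+b)-(g(F)+b)=\delta(F)+b$ is then not backed by any amalgamation result. If you take the punctured surface instead, there is no amalgamation formula to quote. Your fallback also fails as stated: joining bridges on one side and absorbing the other side's arcs is the classical proof of $t\le g+b-1$. That proof uses that both complementary pieces of the bridge surface are handlebodies, which $M_1$ and $M_2$ need not be.

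The missing idea is to amalgamate before the knot enters. The paper first amalgamates the relative splittings $M_i=C_i\cup_{\Sigma_i}H_i$ along the closed surface $F$. This gives a genuine Heegaard surface $F'$ of $S^3$ of genus $g_1+g_2-g(F)=\delta(F)$, obtained from $F$ by tubing along the cores of the $1$-handles of $C_1$ and $C_2$.

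After fixing the $b$-bridge position and its bridge disks, the tubes are chosen disjoint from $K$ and from the bridge disks. The feet go in the complement of the finite graph on $F$ formed by the diagram and the bridge-disk arcs, and any remaining intersections are removed by innermost-circle and outermost-arc moves. The same arcs and disks then exhibit $K$ in $b$-bridge position with respect to $F'$, which is a $(\delta(F),b)$-decomposition. The standard Morimoto--Sakuma--Yokota bound $t(K)\le g+b-1$ then gives the lemma, with no bookkeeping of punctures at all.
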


\begin{proof}
Write
\[
S^3=M_1\cup_F M_2,
\]
and let $g_i=g(M_i;F)$ for $i=1,2$.
Choose a minimal relative Heegaard splitting
\[
M_i=C_i\cup_{\Sigma_i} H_i
\]
such that $C_i$ is a compression body with $\partial_-C_i=F$ and $H_i$ is a handlebody.  Then $g(\Sigma_i)=g_i$.

By the standard amalgamation construction for Heegaard splittings of manifolds with boundary, these splittings determine a Heegaard splitting of
$M_1\cup_F M_2=S^3$
with Heegaard surface $F'$.  We use the usual genus formula for amalgamation; see Schultens \cite{ScAdd} and Yang--Lei \cite[Introduction]{YL}.
Thus
\[
g(F')=g(\Sigma_1)+g(\Sigma_2)-g(F)
      =g(M_1;F)+g(M_2;F)-g(F)
      =\delta(F).
\]

Let $b=b(K;F)$, and choose a $b$-bridge position of $K$ with respect
to $F$ in the sense of Definition~\ref{def:surface_bridge}.  Thus the upper
bridge arcs are boundary-parallel in $F\times[0,1]$ and the lower bridge arcs
are boundary-parallel in $F\times[-1,0]$.  Fix bridge disks
\[
\Delta_1^+,
\ldots,
\Delta_b^+,
\Delta_1^-,
\ldots,
\Delta_b^-
\]
for these arcs.  Each $\Delta_j^+$ has boundary equal to an upper bridge arc
together with an arc on $F$, and similarly for $\Delta_j^-$.

Geometrically, $F'$ is obtained from $F$ by tubing along the $1$-handles
arising from the compression bodies $C_1$ and $C_2$; see
Figure~\ref{fig:amalgamation}.  We choose the attaching disks of these tubes
in $F$ after the bridge position has been fixed.  Since the projected bridge
diagram and the boundary arcs of the bridge disks form a finite graph on
$F$, the attaching disks may be taken to be small disks in the complement of
this graph.  In particular, the tube feet are disjoint from $K\cap F$ and from
the arcs $\Delta_j^\pm\cap F$.

It remains to ensure that the interiors of the bridge disks are also disjoint
from the tube neighborhoods.  Put the bridge disks and the tube neighborhoods
in general position.  Their intersections are compact 1-manifolds.  Closed
intersection curves are removed by an innermost-circle argument on the bridge
disks, using the fact that the meridian disks of the tube feet are disks.
After all circle components have been removed, any remaining intersection arc
is treated by an outermost-arc argument.  An outermost such arc cuts off a disk
from some bridge disk whose other boundary arc lies either on a tube foot or on
the side of a tube neighborhood; pushing this subdisk across the corresponding
simply connected disk removes the intersection.  The isotopy is supported in a
small product neighborhood of the tube and is taken rel boundary, so it does
not move the bridge arc of $K$ or the boundary arc lying on $F$.  Repeating
this process makes all bridge disks disjoint from the tube neighborhoods.

After the tubes have been attached, the same upper and lower bridge arcs remain
boundary-parallel to the new surface $F'$.  Indeed, the bridge disks just
constructed are disjoint from the tubes, and their boundary arcs on $F$ lie in
$F\setminus\{\text{tube feet}\}$, which is naturally contained in $F'$.  Hence
the same disks, viewed in the complementary compression bodies determined by
$F'$, are bridge disks for the same bridge arcs.  Therefore the chosen
$b$-bridge presentation with respect to $F$ induces a $b$-bridge presentation
with respect to the Heegaard surface $F'$.  Consequently
\[
b(K;F')\le b(K;F)=b.
\]

Now $F'$ is a Heegaard surface of genus $\delta(F)$.
Therefore a $b(K;F')$-bridge position of $K$ with respect to $F'$ gives a
\[
(\delta(F),\,b(K;F'))
\]
-decomposition of $K$ in the sense of Morimoto--Sakuma--Yokota \cite{MSY}.
Applying the standard inequality for a $(g,b)$-decomposition, we obtain
\[
t(K)\le \delta(F)+b(K;F')-1
      \le \delta(F)+b(K;F)-1.
\]
This proves the lemma.
\end{proof}
\begin{figure}[htbp]
    \centering
    \begin{tikzpicture}[scale=1.2, thick]
        
        \fill[blue!5] (-4, 0) rectangle (4, 3);
        \fill[green!5] (-4, 0) rectangle (4, -3);

        \draw[very thick] (-4,0) -- (4,0) node[right] {$F$};

        \draw[fill=white] (-2.5,0) .. controls (-2.5,1.5) and (-1.5,1.5) .. (-1.5,0);
        \draw[fill=white] (1.5,0) .. controls (1.5,1.5) and (2.5,1.5) .. (2.5,0);
        
        \draw[fill=white] (-0.5,0) .. controls (-0.5,-1.5) and (0.5,-1.5) .. (0.5,0);

        \draw[ultra thick, blue, dashed] (-4,0.1) 
            -- (-2.6, 0.1) .. controls (-2.6,1.6) and (-1.4,1.6) .. (-1.4, 0.1)
            -- (-0.6, 0.1) .. controls (-0.6,-1.4) and (0.6,-1.4) .. (0.6, 0.1)
            -- (1.4, 0.1) .. controls (1.4,1.6) and (2.6,1.6) .. (2.6, 0.1)
            -- (4,0.1) node[above right] {$F'$};

        \draw[ultra thick, red] (-3.2, -1) .. controls (-3.5, 0) and (-2.8, 0) .. (-3.2, 1) node[above] {$K$};
        \draw[ultra thick, red] (-1, -1) .. controls (-1.2, 0) and (-0.8, 0) .. (-1, 1);
        \draw[ultra thick, red] (1, -1) .. controls (0.8, 0) and (1.2, 0) .. (1, 1);
        \draw[ultra thick, red] (3.2, -1) .. controls (2.8, 0) and (3.5, 0) .. (3.2, 1);

        \node at (0, 2) {$H_1$};
        \node at (0, 0.5) {$C_1$};
        \node at (-3, -2) {$H_2$};
        \node at (-3, -0.5) {$C_2$};
        \node at (0, -2) {$1$-handle of $C_2$};
        \node at (2, 2) {$1$-handle of $C_1$};

    \end{tikzpicture}
\caption{Schematic illustration of the amalgamation of Heegaard splittings of
$M_1$ and $M_2$ along $F$. The resulting Heegaard surface $F'$ is obtained by
tubing $F$ according to the $1$-handles coming from the compression bodies
$C_1$ and $C_2$.  The red arcs indicate bridge arcs schematically; the figure
is not intended to depict the full closed knot.}
    \label{fig:amalgamation}
\end{figure}

\subsection{Step 4. Completion of the proof}

\begin{proof}[Proof of Proposition~\ref{prop:zero}]
Assume that $c(K;F)=0$.  Then $K$ admits a diagram on $F$ with no crossings.
For this diagram there are no ascending crossings, and hence $a(K;F)=0$.  By
Remark~\ref{rem:bridge_positive}, $b(K;F)\ge 1$; in fact the next inequality
will force $b(K;F)=1$ in this case.  Lemma~\ref{lem:asc_bridge} and
Lemma~\ref{lem:amalgamation} give
\[
0=a(K;F)\ge b(K;F)-1\ge t(K)-\delta(F).
\]
Therefore $t(K)\le\delta(F)$.
\end{proof}

\begin{proof}[Proof of Theorem~\ref{thm:fundamental}]
Assume that $t(K)>\delta(F)$.  Proposition~\ref{prop:zero} first implies that
$c(K;F)\ne 0$.  Thus $c(K;F)>0$, and Lemma~\ref{lem:asc_cross} applies.
Combining Lemmas~\ref{lem:asc_cross}, \ref{lem:asc_bridge}, and
\ref{lem:amalgamation}, we obtain
\[
\frac{c(K;F)-1}{2}\ge a(K;F)\ge b(K;F)-1\ge t(K)-\delta(F).
\]
Multiplying by $2$ and adding $1$ gives
\[
c(K;F)\ge 2\bigl(t(K)-\delta(F)\bigr)+1.
\]
This completes the proof.
\end{proof}

\section{Examples and Sharpness of the Estimate}\label{sec:sharpness}
In this section we study the family $K_m$ introduced in Section~\ref{subsec:explicit}.

\subsection{Linear lower bound}\label{subsec:family}
By Lemma~\ref{lem:tunnel_bound} below, due to Morimoto \cite{M} and
Scharlemann--Schultens \cite{SS}, we have $t(K_m) \ge m$.

\begin{lemma}[{\cite{M, SS}}]\label{lem:tunnel_bound}
$t(K_{m}) \ge m$.
\end{lemma}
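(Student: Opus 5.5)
The plan is to combine two standard inputs: subadditivity-type lower bounds for tunnel number under connected sum, and induction on $m$. Since $t(K_0)=1$ and $K_0$ is nontrivial, $K_0$ is prime. If $K_0$ is a trefoil, more generally a $2$-bridge knot, then it is small, meaning its exterior contains no closed essential surface.

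For such knots the key input is Morimoto's theorem~\cite{M}: if $K_1$ and $K_2$ are knots whose exteriors contain no closed essential surfaces (or, more generally, are m-small), then
\[
t(K_1\# K_2)\ge t(K_1)+t(K_2).
\]
One can apply this inductively with $K_1=K_{m-1}$ and $K_2=K_0$. The difficulty is that $K_{m-1}$ is composite, and a composite knot has an essential swallow-follow torus in its exterior. So the hypothesis must be the weaker one in Morimoto's version, namely that only one summand need be small, or one should use Scharlemann--Schultens.

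Plan accordingly:
\begin{enumerate}
\item Use the Scharlemann--Schultens result~\cite{SS}: for prime knots $K_1,\dots,K_m$,
\[
t(K_1\#\cdots\# K_m)\ge m.
\]
Their argument goes by thin position of a minimal-genus Heegaard splitting of the exterior. The decomposing annuli are made essential, and the splitting is untelescoped, so that each summand's exterior must carry at least one genus of excess; this gives $g(E(K_m))\ge m+1$.
\item Since $K_0$ is nontrivial, each of the $m$ summands is nontrivial. If $K_0$ is not prime, replace the count $m$ by the number of prime summands, which is at least $m$. Either way the tunnel number is at least $m$.
\end{enumerate}

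The main obstacle is verifying the hypotheses of the cited theorems in the stated generality, namely for an arbitrary $K_0$ with $t(K_0)=1$. Tunnel number one knots are prime (Norwood), which settles primeness. Beyond that, I would simply invoke~\cite{SS}, whose bound $t\ge$ (number of prime summands) needs only primality. This avoids the smallness issue entirely, so no induction through composite knots is required.
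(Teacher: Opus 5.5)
Your proposal is correct and matches the paper, which gives no argument of its own beyond citing Morimoto and Scharlemann--Schultens; reducing to the Scharlemann--Schultens bound (the tunnel number of a sum of $m$ nontrivial knots is at least $m$) is exactly that citation. The appeal to Norwood's primality theorem is harmless but not needed: as you note yourself, any nontrivial $K_0$ makes $K_m$ a sum of at least $m$ prime knots.
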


If $m>\delta(F)$, then $t(K_m)\ge m>\delta(F)$, so the zero-crossing case
of Theorem~\ref{thm:fundamental} is impossible.  Hence
\[
c(K_m;F) \ge 2(m-\delta(F))+1,
\]
so $c(K_m;F)\to\infty$ linearly in $m$.

\subsection{Linear upper bound}

\begin{proposition}\label{prop:upper_bound}
For every closed surface $F\subset S^{3}$, there exists a constant $C > 0$ (independent of $m$) such that $c(K_{m};F) \le C \cdot m$.
\end{proposition}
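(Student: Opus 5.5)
The plan is to combine Remark~\ref{rem:finite_crossing} with subadditivity of the classical crossing number under connected sum. By Remark~\ref{rem:finite_crossing}, or equivalently by repeated use of Proposition~\ref{prop:compression} down to a sphere, we have $c(K;F)\le c(K)$ for every knot $K$. It therefore suffices to bound $c(K_m)$ linearly in $m$, and the resulting constant will not even depend on $F$.

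For the bound on $c(K_m)$, take a minimal planar diagram $D_0$ of $K_0$ with $c(K_0)$ crossings; for the trefoil this number is $3$. Place $m$ copies of $D_0$ side by side in a disk. At an outer edge of each copy, cut a small arc. Then join consecutive copies by two parallel arcs that run through the unbounded region, in the standard connected-sum fashion. This introduces no new crossings. The result is a diagram of $K_0\#\cdots\#K_0=K_m$ with $m\,c(K_0)$ crossings. Since the connected sum of knots is well defined up to isotopy once orientations are chosen consistently, this diagram really represents $K_m$. Hence $c(K_m)\le m\,c(K_0)$.

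Next realize this planar diagram inside a small disk $\Delta\subset F$. The product $\Delta\times[-1,1]\subset N(F)$ is a $3$-ball, and projecting to $F$ gives a regular diagram of $K_m$ on $F$ with $m\,c(K_0)$ crossings. This is exactly the argument of Remark~\ref{rem:finite_crossing}. It shows
\[
c(K_m;F)\le c(K_m)\le c(K_0)\cdot m ,
\]
so we may take $C=c(K_0)$, which is positive because $K_0$ is nontrivial.

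The only point needing any care is the step that the diagram in a ball of $N(F)$ represents the same knot type in $S^3$ as $K_m$. This holds because any two embedded $3$-balls in $S^3$ are ambient isotopic, so a knot contained in a ball has a well-defined knot type independent of the ambient surface. Everything else is routine. Combined with Section~\ref{subsec:family}, the proposition shows that $c(K_m;F)$ grows exactly linearly for $m>\delta(F)$.
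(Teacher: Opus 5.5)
Your proposal is correct and matches the paper's proof: $c(K;F)\le c(K)$ by Remark~\ref{rem:finite_crossing}, subadditivity by juxtaposing diagrams giving $c(K_m)\le m\,c(K_0)$, and then $C=c(K_0)$. One small wording slip: for $m\ge 3$ each interior copy must be cut in two places on its outer boundary (or you can iterate the two-summand juxtaposition $K_m=K_{m-1}\#K_0$), since a single cut arc per copy does not give enough endpoints for two parallel joining arcs to each neighbour.
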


\begin{proof}
By Remark~\ref{rem:finite_crossing}, $c(K;F)\le c(K)$ for any knot $K$.  We
only use the standard subadditivity inequality for the classical crossing
number: juxtaposing diagrams gives
\[
c(K_1\#K_2)\le c(K_1)+c(K_2).
\]
The corresponding equality is the well-known open additivity problem for
crossing number.  Iterating the inequality gives
\[
c(K_m)\le m\,c(K_0).
\]
Setting $C=c(K_0)$ gives $c(K_m;F)\le C m$.
\end{proof}

\begin{theorem}\label{thm:sharpness}
For the family $K_{m}$, there exist positive constants $C_1$ and $C_2$,
and an integer $m_0$, depending on $F$ but independent of $m$, such that
\[
C_1 \cdot m \le c(K_{m};F) \le C_2 \cdot m
\]
for all $m\ge m_0$.
\end{theorem}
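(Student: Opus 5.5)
The plan is to combine the two one-sided linear estimates already established in this section, and then make the constants explicit.

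For the upper bound, we invoke Proposition~\ref{prop:upper_bound} directly. It gives $c(K_m;F)\le c(K_0)\,m$ for every $m\ge 1$. We therefore set $C_2=c(K_0)$. This is positive because $K_0$ is nontrivial; for the trefoil, $C_2=3$. It does not depend on $m$, and it does not even depend on $F$.

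For the lower bound, we set $m_0=2\delta(F)+1$; this exceeds $\delta(F)$ since $\delta(F)\ge 0$ by Remark~\ref{rem:deficiency}. For every $m\ge m_0$, Proposition~\ref{prop:linear_lower_intro} (equivalently, Lemma~\ref{lem:tunnel_bound} together with Theorem~\ref{thm:fundamental}) gives
\[
c(K_m;F)\ge 2\bigl(m-\delta(F)\bigr)+1 .
\]
It remains to absorb the additive constant $-2\delta(F)+1$ into a multiplicative constant. Since $m\ge 2\delta(F)+1>2\delta(F)$, we have $\delta(F)\le m/2$. Hence
\[
2\bigl(m-\delta(F)\bigr)+1\ \ge\ 2m-m+1\ >\ m .
\]
So we may take $C_1=1$, which yields $C_1 m\le c(K_m;F)$ for all $m\ge m_0$. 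More generally, any fixed $C_1<2$ works once $m$ is chosen large enough relative to $\delta(F)$.

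The only point requiring care is this last step. The threshold inequality is affine rather than linear, so the lower constant is only valid beyond a threshold $m_0$ depending on $\delta(F)$. This is exactly why the statement allows $m_0$ to depend on $F$. The remaining steps are direct citations of results already proved, and there is no real obstacle beyond checking that $\delta(F)$ is a finite nonnegative integer. That holds because relative Heegaard genera exist for the compact manifolds $M_i$.
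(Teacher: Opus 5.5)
Your proof is correct and follows the paper's argument. You take $C_2=c(K_0)$ from Proposition~\ref{prop:upper_bound}, and you get $C_1=1$ by absorbing the affine bound $c(K_m;F)\ge 2(m-\delta(F))+1$ past a threshold; the only difference is cosmetic, since you take $m_0=2\delta(F)+1$ where the paper takes the slightly smaller $m_0=\max\{\delta(F)+1,\,2\delta(F)-1\}$, and both choices work.
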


\begin{proof}
For $m>\delta(F)$, Theorem~\ref{thm:fundamental} and Lemma~\ref{lem:tunnel_bound} give
\[
c(K_m;F)\ge 2(m-\delta(F))+1.
\]
The inequality $2(m-\delta(F))+1\ge m$ is equivalent to
$m\ge 2\delta(F)-1$.  At the same time, the threshold theorem requires
$m>\delta(F)$, i.e. $m\ge \delta(F)+1$ since $m$ is an integer.  Hence we may
take
\[
m_0=\max\{\delta(F)+1,\,2\delta(F)-1\}
\]
and $C_1=1$ for the lower bound.  The upper bound is
Proposition~\ref{prop:upper_bound}; for instance one may take $C_2=c(K_0)$.
\end{proof}

\section{Discussion and Further Questions}

\subsection{Hierarchy of surface invariants}
The proof gives the following hierarchy.  Once $t(K)>\delta(F)$, the
zero-crossing stratum is excluded and one has
\[
\frac{c(K;F)-1}{2}\ge a(K;F)\ge b(K;F)-1\ge t(K)-\delta(F).
\]
Thus, above the threshold $\delta(F)$, the diagrammatic invariants
$c(K;F)$ and $a(K;F)$ and the bridge invariant $b(K;F)$ all detect tunnel
complexity.  At or below the threshold, however, the knot may be carried by
$F$ without crossings, and the crossing number alone no longer records this
complexity.

\subsection{Behavior under connected sums}

The additivity of the classical crossing number,
\[
c(K_1 \# K_2) = c(K_1) + c(K_2),
\]
is still open. The behavior of the surface invariants $c(K;F)$, $a(K;F)$, and $b(K;F)$ under connected sums is also unclear for a general closed surface $F$.

When $F$ is a Heegaard surface of $S^{3}$, Doll \cite{Doll} proved an additivity formula for the surface bridge number $b(K;F)$. For general closed surfaces, and especially for $c(K;F)$ and $a(K;F)$, much less seems to be known.

\begin{question}
How do the surface crossing number $c(K;F)$ and the surface ascending number $a(K;F)$ behave under the connected sum $K_1 \# K_2$? Furthermore, under what general conditions does $b(K;F)$ exhibit additivity when $F$ is an arbitrary closed surface rather than a Heegaard surface of $S^{3}$?
\end{question}


\subsection{Comparison with the trunk of a knot}\label{subsec:trunk}
The trunk of a knot $K$, introduced in \cite{O_trunk}, is a minimax invariant defined using intersections with level spheres. One can define an analogous quantity for a closed surface $F$.

\begin{definition}
Let $N(F) \cong F \times [-1, 1]$ be a regular neighborhood of $F$ in $S^{3}$, and let $h \colon N(F) \to [-1, 1]$ be the projection onto the second factor. We isotope $K$ into $N(F)$ such that the restriction $h|_{K}$ is a Morse function. The \emph{surface trunk} $\operatorname{trunk}(K;F)$ is defined as the minimal value of
\[
\max_{t \in [-1,1]} |K \cap (F \times \{t\})|,
\]
where the minimum is taken over all such isotopic positions of $K$ in $N(F)$.
\end{definition}

The surface bridge number bounds the surface trunk from above:
\[
\operatorname{trunk}(K;F) \le 2b(K;F).
\]
This suggests that trunk-type invariants behave differently from the surface crossing number considered here.  In the classical case, Davies and Zupan \cite{DZ} proved
\[
\operatorname{trunk}(K_1 \# K_2) = \max\{\operatorname{trunk}(K_1), \operatorname{trunk}(K_2)\}.
\]
Thus classical trunk stays bounded under iterated connected sums of a fixed knot, whereas the tunnel number need not.  This contrast explains why the present lower bound is formulated in terms of crossing, ascending, and bridge data rather than trunk alone.

\subsection{Generalization to surfaces with boundary}
A similar argument should apply to compact orientable surfaces with boundary
properly embedded in $S^{3}$.  Replacing the two complementary pieces of a
closed surface by the exterior
$E(F)=S^{3}\setminus \operatorname{int}N(F)$ suggests a threshold statement of
the following form: if $t(K)>g(E(F))$, then
\[
c(K;F)\ge 2\bigl(t(K)-g(E(F))\bigr)+1,
\]
where $g(E(F))$ is the Heegaard genus of the exterior of the surface.  The
corresponding zero-crossing statement would say that a crossing-free diagram
forces $t(K)\le g(E(F))$.  We do not pursue this here.

\subsection{Generalization to spatial graphs}
One can also ask for an analogue for spatial graphs.  A natural tunnel number
for a spatial graph $G$ may be defined by taking the minimum number of arcs in
the exterior of a regular neighborhood $N(G)$ whose addition makes the exterior
a handlebody; equivalently, one may formulate it in terms of the Heegaard genus
of $S^3\setminus \operatorname{int}N(G)$.  Since vertices change the local
bridge structure, any graph version should include a correction term depending
on the combinatorics of $G$, for instance on the number of vertices and their
valences.  Thus one expects a lower bound of the form
\[
c(G;F) \ge 2\bigl(t(G)-\delta(F)\bigr) + C_G,
\]
where $C_G$ is a graph-theoretic correction term.  Determining the correct
normalization of $t(G)$ and $C_G$ is part of the problem.

\subsection{Necessity of the manifold condition}
The assumption that $F$ is a genuine surface is essential. If singular spaces are allowed, the conclusion fails.

Dynnikov \cite{D} showed that every knot embeds without crossings in a 3-page book, namely a 2-complex formed by three half-planes meeting along a common axis. It is also known that there exists a branched surface $B \subset S^{3}$ that contains all knots as embedded curves \cite{G}. In both settings the crossing number is zero for every knot.

\subsection{Asymptotic behavior for prime knots}
The family $K_m$ suggests a linear relation between surface crossing number and tunnel number for some families. This cannot hold for all knots: the planar crossing number of a torus knot $T(p, q)$ or a 2-bridge knot can be arbitrarily large even though the tunnel number is always one.

\begin{question}
For which classes of prime knots does $c(K;F) = \Theta(t(K))$ hold? In particular, what geometric or topological conditions on a family of prime knots ensure that $c(K;F)$ is bounded above by a linear function of $t(K)$?
\end{question}

\subsection{Minimality of alternating projections}

The Kauffman--Murasugi--Thistlethwaite theorem says that a reduced alternating diagram on $S^2$ realizes the minimal crossing number. For a closed surface $F \subset S^3$, one should at least require that the diagram be cellular, namely that each component of $F \setminus D$ is a disk \cite{Oz06}. This leads to the following question.

\begin{question}
Let $F \subset S^3$ be a closed surface. If a knot $K$ admits a reduced alternating regular projection $D$ on $F$ such that each region of $F \setminus D$ is a disk, does $D$ realize the surface crossing number, namely $c(D) = c(K;F)$?
\end{question}

\section*{Acknowledgements}
The author thanks the editors and referees whose comments helped improve the
formulation and exposition of this work.  The author used ChatGPT (OpenAI) for
language editing, organizational suggestions, and preliminary consistency
checks during the preparation of this manuscript.  All mathematical content,
including the definitions, statements, proofs, and references, was reviewed and
verified by the author, who takes full responsibility for the paper.

\end{document}